\documentclass[11pt]{amsart}

\usepackage[T1]{fontenc}
\usepackage[utf8]{inputenc}
\usepackage{textcomp}
\usepackage{lmodern}
\usepackage{microtype}

\usepackage{amssymb}
\usepackage{amsthm}
\usepackage{amscd}

\usepackage{hyperref}

\usepackage{mathscinet}
\usepackage[giveninits=true,doi=false,url=false,sortcites,backend=biber,backref=true]{biblatex}
\addbibresource{../../Bibliographies/main.bib}


\newtheorem{theorem}{Theorem}[section]

\newtheorem{lemma}[theorem]{Lemma}

\theoremstyle{definition}
\newtheorem{definition}[theorem]{Definition}






\allowdisplaybreaks[2]


\begin{document}

\title{An Aldous--Hoover Theorem for Radon Distributions}
\author{Henry Towsner}
\date{\today}
\thanks{Partially supported by NSF grant DMS-2054379}
\address {Department of Mathematics, University of Pennsylvania, 209 South 33rd Street, Philadelphia, PA 19104-6395, USA}
\email{htowsner@math.upenn.edu}
\urladdr{\url{http://www.math.upenn.edu/~htowsner}}

\begin{abstract}
We show that the Aldous--Hoover Theorem, giving representations for exchangeable arrays of Borel-valued random variables, extends to random variables where the common distribution of the random variables is Radon, or even merely compact, a weaker condition that does not even require that the values come from a Hausdorff space. This extends work of Alam \cite{alam2023generalizing} who showed a similar generalization of the di Finetti--Hewitt-Savage Theorem.
\end{abstract}

\maketitle

\section{Introduction}

Consider an infinite sequence of $\{0,1\}$-valued random variables $\{\mathbf{X}_i\}_{i\in\mathbb{N}}$ which are not only identically distributed, but whenever we take $k$ of these random variables, the probability that they are simultaneously equal to $1$ depends only on the number $k$, and not the particular random variables involved---$\mathbb{P}(\mathbf{X}_1=1 \text{ and }\mathbf{X}_2=1)$ is the same as $\mathbb{P}(\mathbf{X}_3=1 \text{ and }\mathbf{X}_{17}=1)$, and similarly for $k$ variables instead of $2$. This immediately implies that the probability of any event involving a combination of the random variables remains the same if we permute the indices.  Such a sequence is called \emph{exchangeable}.

If the $\mathbf{X}_i$ are independent as well as identically distributed then they are certainly exchangeable. However there are other exchangeable sequence: suppose we have two coins, one biased towards heads and one towards tails, and we first randomly choose a coin and then determine the $\mathbf{X}_i$ by flipping the chosen coin once for each $i$ and recording a $1$ if the coin is heads and a $0$ if the coin is tails. Then the $\mathbf{X}_i$ will not be independent---knowing that $\mathbf{X}_1=1$ makes it more likely that the coin was the one biased towards heads, which makes it more likely that $\mathbf{X}_2=1$ as well---but they remain exchangeable.

More generally, if $\mu$ is any distribution on $[0,1]$, we obtain an exchangeable sequence $\mathbf{Y}^\mu$ by first choosing $p$ according to $\mu$ and then letting each $\mathbf{Y}^\mu_i=1$ independently with probability $p$. Di Finetti's Theorem \cite{diFinetti,alvarezmelis2015translation} says that all exchangeable sequences have this form: for any exchangeable sequence $\{\mathbf{X}_i\}$, there is a distribution $\mu$ on $p$ so that random variables $\{\mathbf{X}_i\}$ have the same distribution as $\mathbf{Y}^\mu$.

This result has been extended to random variables taking values in larger spaces than $\{0,1\}$---the Borel measure on the unit interval \cite{MR1508036,MR0055601}, the Baire $\sigma$-algebra on a compact Hausdorff space \cite{MR76206}, and, as a consequence, the Borel $\sigma$-algebra on any analytic subset of a Polish space \cite{MR159923}. However Dubins and and Freedman constructed a space and a sequence of random variables \cite{Dubins:1979aa} which do not satisfy the conclusion of di Finetti's Theorem, showing that some restrictions on the space of values are required. Recently, Alam showed \cite{alam2023generalizing} an extension to random variables using only the assumption that the common distribution (the distribution of an individual random variable $\mathbf{X}_i$) is Radon (indeed, slightly less than this).

In a different direction, the Aldous--Hoover Theorem \cite{aldous:MR637937,hoover:arrays} generalizes di Finetti's Theorem to exchangeable \emph{arrays}. By an array of random variables, we mean a collection of random variables $\{\mathbf{X}_e\}_{e\in\binom{\mathbb{N}}{n}}$ indexed by sets of integers of size exactly $n$. This array is exchangeable if the joint probability distribution is preserved under permutations of the indices---that is, the probability of an event involving several of the events in the array is preserved if we replace each $\mathbf{X}_e$ with $\mathbf{X}_{\pi(e)}$ where $\pi$ is some permutation of $\mathbb{N}$. Stating this more formally becomes notationally heavy, so we define this precisely below. (There are many variations on this notion---where the variables are indexed by finite sequences instead of finite sets, or by more complicated objects like finite trees, or with various restrictions on the permutations considered \cite{kallenberg:MR2161313,MR3230009,2015arXiv150906733C,2015arXiv150906170A,MR4213163}. Since we are concerned in this paper with the range of the random variables, and not the exact amount of symmetry the array has, we consider only these arrays indexed by finite sets.)

To state the Aldous--Hoover Theorem, it helps to think of di Finetti's Theorem as decomposing the random information in the $\mathbf{X}_i$ into a global part shared by all the $\mathbf{X}_i$ (the choice of a particular value of $p$ according to $\mu$) and a local part specific to each $\mathbf{X}_i$ and independent from all other variables in the sequence. That is, we can think of di Finetti's Theorem as saying that we can write any exchangeable array in the form
\[\mathbf{Y}^\rho_i=\rho(\xi_\emptyset,\xi_i)\]
where $\xi_\emptyset$ and the $\{\xi_i\}_{i\in\mathbb{N}}$ are chosen i.i.d.\ from a suitable distribution and $\rho$ is a measurable function.

The analogous decomposition for an array requires us to have not only a global part and a local part, but also intermediate parts shared by variables with some common coordinates. We choose random variables $\{\xi_s\}$ i.i.d.\ for every set $s\subseteq\mathbb{N}$ of size at most $n$, and then set $\mathbf{Y}^\rho_e=\rho(\{\xi_s\}_{s\subseteq e})$.

The Aldous--Hoover Theorem says that every exchangeable array of random variables valued in a standard Borel space has the same distribution of one of the form $\mathbf{Y}^\rho_e$. (Indeed, since all standard Borel spaces are isomorphic, the $\xi_s$ can be chosen to come from $[0,1]$ with the usual Lebesgue measure.)


In this paper we give a short proof of a common generalization, showing that the Aldous--Hoover Theorem holds whenever the common distribution of the random variables is \emph{compact} (inner regular for some compact collection of sets), a property close to the one used in \cite{alam2023generalizing}, but which does not even require the underlying space to be Hausdorff. Indeed, the proof follows almost immediately by combining two existing ingredients---Hoover's original proof using nonstandard analysis \cite{hoover:arrays} and Ross' work extending the standard part map to compact spaces \cite{MR1079898}.

\section{Exchangeable Arrays}

In order to talk about the distribution of an exchangeable array of random variables, it will be helpful to introduce some notation. Let $\{\mathbf{X}_{\{i_1,\ldots,i_n\}}\}_{i_1,\ldots,i_n\in\mathbb{N}}$ be an array of random variables valued in some topological space $T$ and let $\mathcal{B}$ be the $\sigma$-algebra generated by the open sets on $T$. (Initially it is convenient to think of $T$ as $[0,1]$ with the usual topology, so $\mathcal{B}$ is the Lebesgue measurable sets, but we are ultimately interested in more general cases.)


The basic events which describe the distribution of such an array are given by hypergraphs:
\begin{definition}
  An \emph{event hypergraph} (in $\mathcal{B}$) consists of a finite set $S$ and, for each $e\in{S\choose n}$, a measurable set $B_e\in \mathcal{B}$. When $(S,\{B_e\})$ is an event hypergraph, $f:S\rightarrow\mathbb{N}$ is a function  $\{\mathbf{X}_{\{i_1,\ldots,i_n\}}\}_{i_1,\ldots,i_n\in\mathbb{N}}$ is an array of random variables, we write $B_{S,\{B_e\},f,\mathbf{X}}$ for the event
  \begin{quote}
    for every $e\in{S\choose n}$, $\mathbf{X}_{f(e)}\in B_e$.
  \end{quote}

$\{\mathbf{X}_{\{i_1,\ldots,i_n\}}\}_{i_1,\ldots,i_n\in\mathbb{N}}$ is \emph{exchangeable} if the distribution of $B_{S,\{B_e\},f,\mathbf{X}}$ is the same for all event hypergraphs $(S,\{B_e\})$ and all injective functions $f$. In this case the \emph{common distribution} of $\{\mathbf{X}_{\{i_1,\ldots,i_n\}}\}_{i_1,\ldots,i_n\in\mathbb{N}}$ is the distribution of $\mathbf{X}_{\{1,\ldots,n\}}$ (by exchangeability, the particular choice of indices does not matter).
\end{definition}


When $\{\mathbf{X}_{i_1,\ldots,i_n}\}_{i_1,\ldots,i_n\in\mathbb{N}}$ is exchangeable, we will often write simply $\mathbb{P}(B_{S,\{B_e\},\mathbf{X}})$ in this case, omitting $f$.

We will always consider event hypergraphs for a single fixed value $n$, the arity of the exchangeable random variable we are considering. In the unary case, an ``event hypergraph'' is simply a finite collection of measurable sets $\{B_s\}_{s\in S}$, and the event $B_{S,\{B_s\},f}$ is essentially the event $\prod_s B_s$.

\section{Motivation}

\subsection{Proving Aldous--Hoover}
Our starting point is to examine the outline of the usual proof of the Aldous--Hoover Theorem for a $\mathbb{R}$-valued random variable using nonstandard analysis. For simplicity, let us focus on the case where the array is dissociated---where $B_{S,\{B_s\},f,\mathbf{X}}$ and $B_{S',\{B'_s\},f',\mathbf{X}}$ are independent whenever $f$ and $f'$ have disjoint range. Let $\{\mathbf{X}_{i_1,\ldots,i_n}\}_{i_1,\ldots,i_n\in\mathbb{N}}$ be a dissociated exchangeable array.

In this case, the Aldous--Hoover Theorem tells us there is a Borel measurable function $\rho$ so that if we choose random variables $\{\xi_\sigma\}_{\sigma\subseteq\mathbb{N}, 0<|\sigma|\leq n}$ i.i.d. then the array
\[\mathbf{Y}_{i_1,\ldots,i_n}=\rho(\{\xi_\sigma\}_{\emptyset\subsetneq\sigma\subseteq\{i_1,\ldots,i_n\}})\]
has the same distribution as $\mathbf{X}_{i_1,\ldots,i_n}$. Being dissociated is equivalent to not needing the $\xi_\emptyset$ datum in this representation.

To construct $\rho$, we first choose a random sample from $\{\mathbf{X}_{i_1,\ldots,i_n}\}_{i_1,\ldots,i_n\in\mathbb{N}}$---that is, we choose some $\omega$ and consider the array of random values $\{\mathbf{X}_{i_1,\ldots,i_n}(\omega)\}_{i_1,\ldots,i_n\in\mathbb{N}}$. We consider the corresponding finite arrays of values $M_k=\{\mathbf{X}_{i_1,\ldots,i_n}(\omega)\}_{i_1,\ldots,i_n\leq k}$.

Since the arrays $M_k$ are finite, we can think of sampling from them. In particular, when $(S,\{B_e\})$ is an event hypergraph, we associate an event $B_{S,\{B_e\},M_k}$ as follows: for each $s\in S$, choose $i_s\leq k$ uniformly at random, and consider the event that the $i_s$ are all distinct and, for each $e$, $M_k[\{i_s\}_{s\in e}]\in B_e$.

For a fixed $(S,\{B_e\})$, as $k$ gets large, the probability that the $i_s$ are pairwise distinct approaches $1$, and a standard second moment argument (using the fact that $\mathbf{X}$ is dissociated) shows that, with probability $1$,
\[\lim_{k\rightarrow\infty}\mathbb{P}(B_{S,\{B_e\},M_k})=\mathbb{P}(\{\mathbf{X}_{i_1,\ldots,i_n}\}_{i_1,\ldots,i_n\in\mathbb{N}}\in B_{S,\{B_e\}}).\]
That is, the distribution we get by sampling from the array $M_k$ approaches the distribution given by the original exchangeable array.

Nonstandard analysis gives us a hyperfinite array $M^*=\{x_{i_1,\ldots,x_n}\}_{i_1,\ldots,x_n\leq k^*}$ of values in the nonstandard reals, $\mathbb{R}^*$. The space $[0,k^*]^n$ has a probability measure on it (the Loeb measure corresponding to the usual counting measure on a finite set). Then hypergraph event $(S,\{B_e\})$ no longer makes sense because $M^*$ is not $\mathbb{R}$-valued, but we can consider the associated event $(S,\{B_e^*\})$. We can choose $i_s\in[0,k^*]$ for each $s\in S$ according to this probability measure, and consider the event $B_{S,\{B_e\},M^*}$ that, for each $e$, $M^*[\{i_s\}]\in B_e^*$. By the usual properties of nonstandard analysis, we have
\[\mathbb{P}(B_{S,\{B_e\},M^*})=\lim_{k\rightarrow\infty}\mathbb{P}(B_{S,\{B_e\},M_k})=\mathbb{P}(B_{S,\{B_e\},\mathbf{X}}).\]

The Loeb measure gives a measure space on $[0,k^*]^n$, but not a product of standard Borel spaces: it contains subsets of $[0,k^*]^n$ which are not generated by boxes, and therefore would not be present in the product space. However there are spaces $\{\Omega_s\}_{0<s\leq n}$ and a measurable, measure-preserving function $\pi:[0,k^*]^n\rightarrow \prod_s\Omega_{|s|}$ so that $\pi$ induces an isomorphism of measure algebras\footnote{This is the most technical step, and the main role of nonstandard analysis in the proof is to make the construction of this decomposition easier. The details are presented below.}. Since it suffices to consider countably many events, we can restrict to a separable subspace of each $\Omega_s$ and then use Maharam's lemma to replace each with the unit interval.

Putting this together, for each event hypergraph $(S,\{B_e\})$, we have, up to measure $0$, an event $D_{S,\{B_e\}}\subseteq\prod_s\Omega_{|s|}$ given by the image of $B_{S,\{B_e\},M^*}$ and so that $\mathbb{P}(D_{S,\{B_e\}})=\mathbb{P}(B_{S,\{B_e\},M^*})=\mathbb{P}(B_{S,\{B_e\},\mathbf{X}})$. 

The construction of $\pi$ ensures that $\pi(i_1,\ldots,i_n)_s$ depends only on $\{i_j\}_{j\in s}$. This has the crucial consequence that the events $D_{S,\{B_e\}}$ respect intersections: the event $D_{S,\{B_e\}}$ is the event
\[\{\{\omega_s\}_{\emptyset\neq s\subseteq s}\mid \forall e\in{S\choose n}\, \{\omega_s\}_{\emptyset\neq s\subseteq e}\in D_{\{1,\ldots,n\},B_e}\}.\]




So we have reproduced most of the probabilistic structure on the events $B_{S,\{B_e\},\mathbf{X}}$, but we have lost the random variable, and need to construct a new one. For any measurable $B\subseteq\mathbb{R}$, consider the hypergraph event $\hat I=(\{1,\ldots,n\},I)$. We have a corresponding event $D_{\hat I}$ (up to measure $0$) in $\prod_s\Omega_s$, and we could hope to define $\rho:\prod_s\Omega_s\rightarrow\mathbb{R}$ so that $\rho^{-1}(I)=D_{\hat I}$.

Because of the measure $0$ errors, we cannot do this for all sets simultaneously, but we can focus on the closed intervals: for each $\{\omega_s\}\in\prod_s\Omega_s$, we consider all those intervals $[a,b]$ so that $\{\omega_s\}\in D_{\widehat{[a,b]}}$. By compactness, the intersection is non-empty, so we may choose $\rho(\{\omega_s\})$ to be an arbitrary element of this intersection. (Indeed, for almost every point, this element is uniquely determined, but it turns out we don't need this, and may not have this in the fully general setting.) For a general set $B\subseteq\mathbb{R}$, we now have two corresponding subsets of $\prod_s\Omega_s$---$\rho^{-1}(B)$ and $D_{\hat B}$, and in general these are not the same. They do, however, have the same measure, since they agree when $B$ is an interval, and the intervals generate $\mathcal{B}$.

We can use $\rho$ to define an array of random variables: for each finite subset $t$ of $\mathbb{N}$ with $0<|t|\leq n$, we choose $\omega_t\in\Omega_{|t|}$ randomly according to the measure on $\Omega_{|t|}$, and we then set $\mathbf{Y}^\rho_{i_1,\ldots,i_n}=\rho(\{\omega_t\}_{\emptyset\neq t\subseteq\{i_1,\ldots,i_n\}})$. By definition, for any $B$ we have $\mathbb{P}(B_{\{1,\ldots,n\},B,\mathbf{Y}^\rho})=\mu(\rho^{-1}(B))=\mathbb{P}(D_{\{1,\ldots,n\},B})$. Because the events $D_{S,\{B_e\}}$ respect intersections, we also have $\mathbb{P}(B_{S,\{B_e\},\mathbf{Y}^\rho})=\mathbb{P}(D_{S,\{B_e\}})$, and therefore $\mathbf{Y}^\rho$ gives our representation of $\mathbf{X}$.

\subsection{Counterexamples}

Next, we consider the example of a space that di Finetti's Theorem does not apply to, to see what the obstacle is. The argument in \cite{Dubins:1979aa} constructs a set $S\subseteq[0,1]$ using a transfinite induction to ensure that the collection of infinite sequences from $S$ has Lebesgue outer measure $1$. This means that we can take any $\mathbb{R}$-valued exchangeable sequence $\{\mathbf{X}_i\}$ and restrict it to $S$, obtaining an $S$-valued exchangeable sequence $\{\mathbf{Y}_i\}$. They then construct an $\mathbb{R}$-valued exchangeable sequence $\{\mathbf{X}_i\}$ in such a way that the restricted $S$-valued sequence $\{\mathbf{Y}_i\}$ cannot have a decomposition of the kind promised by di Finetti's Theorem.

The first part of the proof of Aldous--Hoover outlined above goes through without change: we can sample a particular sequence $\{\mathbf{Y}_i(\omega)\}$ of values from $S$. Moreover, because $S$ was constructed so that the sequences from $S$ have Lebesgue outer measure $1$, this sequence ``looks just like'' the sequence we would have gotten if we had instead started with the original, $\mathbb{R}$-valued, sequence $\{\mathbf{X}_i\}$.

Indeed, in some sense the nonstandard arguments ``can't tell'' that we are supposed to be working in the restriction to $S$. The array $M^*$ looks just like an array we would get working with $\mathbb{R}$-valued random variables, so while the decomposition part of the argument goes through unchanged, efforts to get a random variable with values in some standard space would naturally land us in $\mathbb{R}$, not in $S$.

\subsection{Compact Measures}

In order to avoid this obstacle, we need our topological space to have enough structure to mimic the standard part map. As it happens, the topological properties needed to produce such a map have been studied. The most general result of this kind I know of is in \cite{MR1079898}, and we use the argument from there to complete our proof.

\begin{definition}
  Let $(X,\mathcal{B},\mu)$ be a probability space. We say this space is \emph{compact} if there exists a collection $\mathcal{K}\subseteq\mathcal{B}$ such that:
  \begin{itemize}
  \item the collection $\mathcal{A}$ is compact---whenever $\mathcal{A}\subseteq\mathcal{K}$ has the finite intersection property (every finite subset of $\mathcal{A}$ has non-empty intersection), $\bigcap\mathcal{A}\neq\emptyset$ as well, and
  \item $\mu$ is $\mathcal{K}$-inner regular---for every set $B\in\mathcal{B}$, $\mu(B)=\sup_{K\subseteq B, K\in\mathcal{K}}\mu(K)$.
  \end{itemize}
\end{definition}

This notion is a slight generalization of the usual notion of a Radon measure: when $X$ is a Hausdorff topological space and $\mathcal{B}$ is the $\sigma$-algebra generated by the open sets, the measure space is Radon precisely when it is compact in the sense above with $\mathcal{K}$ the collection of compact sets. (Ross shows \cite{MR1079898} that any compact measure space has an extension which is a Radon space.)


\section{Main Theorem}






\begin{theorem}
  Suppose $\{\mathbf{X}_{\{i_1,\ldots,i_n\}}\}_{i_1,\ldots,i_n\in\mathbb{N}}$ is an exchangeable array of random variables valued in a measurable space $(T,\mathcal{U})$ such that the common distribution of the individual random variable $\mathbf{X}_{\{i_1,\ldots,i_n\}}$ is compact, and let $\mathcal{K}\subseteq\mathcal{U}$ the collection of sets witnessing this.

  Then there are probability measure spaces $\{(\Delta_k,\mathcal{D}_k,\mu_k)\}_{k\leq n}$ and a measurable function $\rho:\prod_{\sigma\subseteq[n]}\Delta_{|\sigma|}\rightarrow T$ so if $\{\xi_\sigma\}_{\sigma\in\binom{\mathbb{N}}{\leq n}}$ are chosen uniformly at random from $\Delta_{|\sigma|}$ then the exchangeable random variable given by $\mathbf{Y}_{\{i_1,\ldots,i_n\}}=\rho(\{\xi_\sigma\}_{\sigma\subseteq\{i_1,\ldots,i_n\}})$ has the same distribution as $\mathbf{X}$.
\end{theorem}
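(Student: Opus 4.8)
The plan is to follow the nonstandard route sketched in Section~3 almost verbatim through the construction of the product decomposition, and then to replace the interval-compactness step by an argument driven by the compact class $\mathcal{K}$. Write $\lambda$ for the common distribution of $\mathbf{X}_{\{1,\dots,n\}}$. First I would sample a realization of the array, pass to the finite subarrays $M_k$, and transfer to a hyperfinite array $M^*$; the associated Loeb measure on $[0,k^*]^n$ reproduces the hypergraph probabilities, so that $\mathbb{P}(B_{S,\{B_e\},M^*})=\mathbb{P}(B_{S,\{B_e\},\mathbf{X}})$ for every event hypergraph. Using Hoover's decomposition I would produce measure spaces $\{\Omega_s\}_{s\subseteq\{1,\dots,n\}}$ and a measure-preserving $\pi\colon[0,k^*]^n\to\prod_s\Omega_{|s|}$ with $\pi(i_1,\dots,i_n)_s$ depending only on $\{i_j\}_{j\in s}$, yielding events $D_{S,\{B_e\}}$ that respect intersections. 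Restricting to a separable part and applying Maharam's lemma turns each $\Omega_{|s|}$ into the desired $(\Delta_{|s|},\mathcal{D}_{|s|},\mu_{|s|})$; note the coordinate $s=\emptyset$ is produced automatically and supplies $\Delta_0$, so the non-dissociated case needs no separate treatment (dissociation being exactly the case where $\Delta_0$ is trivial).

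The crux is to manufacture a measurable $\rho\colon\prod_{\sigma\subseteq[n]}\Delta_{|\sigma|}\to T$ with the right law, and here $\mathcal{K}$ plays the role that closed intervals played in the real-valued proof. I would fix a countable subfamily $\mathcal{K}_0\subseteq\mathcal{K}$ that is closed under finite intersections and is inner-approximating for $\lambda$ and for the complements of its own members. For a point $\omega\in\prod_\sigma\Delta_{|\sigma|}$ set $\mathcal{K}_{0,\omega}=\{K\in\mathcal{K}_0:\omega\in D_{\widehat K}\}$, where $\widehat K$ is the one-edge hypergraph assigning $K$ to $\{1,\dots,n\}$. Whenever $K_1,\dots,K_m\in\mathcal{K}_0$ have $\bigcap_i K_i=\emptyset$, the sets $D_{\widehat{K_i}}$ are events about the single variable $\mathbf{X}_{\{1,\dots,n\}}$, so $\mu\big(\bigcap_i D_{\widehat{K_i}}\big)=\lambda\big(\bigcap_i K_i\big)=0$; since $\mathcal{K}_0$ is countable there are only countably many such finite subfamilies, and discarding their union leaves a full-measure set on which $\mathcal{K}_{0,\omega}$ has the finite intersection property. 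Compactness of $\mathcal{K}$ then gives $\bigcap\mathcal{K}_{0,\omega}\neq\emptyset$, and I would let $\rho(\omega)$ be any element of this intersection.

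To see that $\rho$ transports $\mu$ to $\lambda$, observe that $\rho(\omega)\in\bigcap\mathcal{K}_{0,\omega}$ gives $\rho^{-1}(K)\supseteq D_{\widehat K}$ up to measure zero, hence $\mu(\rho^{-1}(K))\geq\lambda(K)$ for $K\in\mathcal{K}_0$. For the reverse inequality I would use inner regularity to choose $K'\in\mathcal{K}_0$ with $K'\subseteq T\setminus K$ and $\lambda(K')$ arbitrarily close to $\lambda(T\setminus K)$; since $\rho^{-1}(K)$ and $\rho^{-1}(K')$ are disjoint, $\mu(\rho^{-1}(K))\leq 1-\mu(\rho^{-1}(K'))\leq 1-\lambda(K')$, and letting $\lambda(K')\uparrow\lambda(T\setminus K)$ forces $\mu(\rho^{-1}(K))=\lambda(K)$. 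Inner $\mathcal{K}$-regularity then upgrades this agreement from $\mathcal{K}_0$ to all of $\mathcal{U}$, so $\rho_*\mu=\lambda$, and in particular $\mathbb{P}(B_{\{1,\dots,n\},B,\mathbf{Y}^\rho})=\mu(\rho^{-1}(B))=\mathbb{P}(D_{\{1,\dots,n\},B})$. Because the events $D_{S,\{B_e\}}$ respect intersections, this identity propagates from single edges to arbitrary event hypergraphs, giving $\mathbb{P}(B_{S,\{B_e\},\mathbf{Y}^\rho})=\mathbb{P}(B_{S,\{B_e\},\mathbf{X}})$, which is the desired equality of distributions.

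The main obstacle is exactly the middle step: producing $\rho$ with values in $T$ rather than in some larger completion. In Hoover's real-valued argument one exploits that nested closed intervals with the finite intersection property meet; here that convergence is supplied instead by the compactness of $\mathcal{K}$, and the almost-everywhere finite intersection property is what must be established to invoke it. The role of inner $\mathcal{K}$-regularity is equally essential, both to pin down $\rho_*\mu$ on all of $\mathcal{U}$ and to carry out the complement approximation above; it is precisely the failure of such regularity that drives the Dubins--Freedman counterexample, where the nonstandard data cannot distinguish the intended subspace and the reconstructed variable escapes it.
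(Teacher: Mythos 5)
Your overall architecture matches the paper's (hyperfinite array, Loeb measure, decomposition into the $\Delta_{|s|}$, then a selection map $\rho$ driven by the compact class), but two steps go wrong. First, you cannot both ``sample a realization of the array'' and claim that the coordinate $s=\emptyset$ comes out automatically. For a single sampled realization, $\lim_k\mathbb{P}(B_{S,\{B_e\},M_k})$ equals (almost surely) the conditional probability of $B_{S,\{B_e\},\mathbf{X}}$ given the exchangeable $\sigma$-algebra, not the unconditional one; the two agree only in the dissociated case, which is exactly why the motivational sketch invokes dissociation and a second-moment argument at that point. In the actual proof the sample space $\Omega^*$ is retained as a coordinate of the hyperfinite index set, and $\Delta_0$ is carved out of $\mathcal{B}_0^-$ on $\Omega^*$; under your setup $\Delta_0$ is genuinely trivial, so the representation you build would misrepresent any non-dissociated array (e.g.\ the two-coin de Finetti mixture).

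Second, and more seriously for the novel part of the theorem: you define $\rho(\omega)$ as a point of $\bigcap\mathcal{K}_{0,\omega}$ for a countable $\mathcal{K}_0$. Your reason for passing to a countable subfamily---that the $D_{\widehat K}$ are only determined modulo null sets, so the finite intersection property can only be arranged off a null set when there are countably many finite subfamilies to discard---is a legitimate subtlety, and you handle it more explicitly than the paper does. But the price is fatal to the last step: for $K\in\mathcal{K}\setminus\mathcal{K}_0$ the construction gives no relation between $\rho^{-1}(K)$ and $D_{\widehat K}$, whereas inner $\mathcal{K}$-regularity approximates a general $B\in\mathcal{U}$ by members of $\mathcal{K}$, not of $\mathcal{K}_0$. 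Since the theorem assumes no countability of $(T,\mathcal{U})$, there need be no countable $\mathcal{K}_0$ that inner-approximates every $B\in\mathcal{U}$, so the squeeze $D_{\widehat{K^+}}\subseteq\rho^{-1}(B)\subseteq\Delta\setminus D_{\widehat{K^-}}$ is unavailable, and neither the measurability of $\rho^{-1}(B)$ nor $\mu(\rho^{-1}(B))=\lambda(B)$ follows. The paper instead intersects over all $K\in\mathcal{K}$ with $\omega\in\widehat K$, so that $\rho^{-1}(K)\supseteq\widehat K$ for \emph{every} $K\in\mathcal{K}$, which is exactly what the regularity argument needs. If you want to keep your (justified) care about null sets, you must still define $\rho$ using all of $\mathcal{K}$ and address the nonemptiness of that full intersection separately, rather than shrink the family that $\rho$ sees.
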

\begin{proof}
The first part of the proof is essentially the nonstandard proof of Aldous--Hoover. Since Hoover's original work has not been published, we include a brief but complete version of this argument here. 
  
  Let $(\Omega,\mathcal{B},\mu)$ be the sample space for the random variables $\mathbf{X}_{\{i_1,\ldots,i_n\}}$, so we may view our array as a collection $\{t_{\omega,\{i_1,\ldots,i_n\}}\}_{(\omega,i_1,\ldots,i_n)\in\Omega\times(\mathbb{N})^n}$ which is symmetric under permutations of the coordinates from $\mathbb{N}$.  For any $M$ we may define a new probability measure on the event hypergraphs: given $(S,\{B_e\})$, we choose $\omega\in\Omega$ according to $\mu$ and choose $f:S\rightarrow[0,M]$ uniformly at random. We define $B_{S,\{B_e\},M}$ to be the event that $f$ is injective on $S$ and, for all $e\in{S\choose n}$, $t_{\omega,\{f(i)\}_{i\in e}}\in B_e$.

  For each $(S,\{B_e\})$, $\lim_{M\rightarrow\infty}\mathbb{P}(B_{S,\{B_e\},M})=\mathbb{P}(B_{S,\{B_e\},\mathbf{X}})$: if $M$ is large enough that $S$ is almost certainly injective then $\mathbb{P}(B_{S,\{B_e\},M})$ is approximately the average over all injections $f:S\rightarrow[M]$ of $B_{S,\{B_e\},f,\mathbf{X}}$.  But for every $f$, $\mathbb{P}(B_{S,\{B_e\},f,\mathbf{X}})=\mathbb{P}(B_{S,\{B_e\},\mathbf{X}})$, so in the limit, $\mathbb{P}(B_{S,\{B_e\},M})$ approaches $\mathbb{P}(B_{S,\{B_e\},\mathbf{X}})$. 

  We fix a suitable nonstandard extension (for instance, coming from an ultraproduct), so we obtain an array $\{t_{\omega,i_1,\ldots,i_n}\}_{(\omega,i_1,\ldots,i_n)\in\Omega^*\times(\mathbb{N}^*)^{n}}$ valued in $T^*$. Fix an element $M^*\in\mathbb{N}^*\setminus\mathbb{N}$ and restrict to the array $\{t_{\omega,i_1,\ldots,i_n}\}_{(\omega,i_1,\ldots,i_n)\in\Omega^*\times(M^*)^{n}}$ where $[M^*]=\{i\in\mathbb{N}^*\mid i<M^*\}$. We can view these as arrays indexed by $\Omega^*\times(M^*)^n$ in the usual way, 

  The Loeb measure gives us a collection of probability measures on $\Omega^*$, $[M^*]$, and their products such as $\Omega^*\times[M^*]^n$. We write $\mathcal{B}_S$ for the measurable subsets of $\Omega^*\times[M^*]^S$, or just $\mathcal{B}_k$ when $S=[k]$.   For any event hypergraph $(S,\{B_e\})$, we let $B_{S,\{B_e\},M^*}$ be the event on $\Omega^*\times[M^*]^S$ containing those tuples $\omega,\{i_j\}_{j\in S}$ which holds exactly when, for each $e\in{S\choose n}$, $t_{\omega,\{i_s\}_{s\in e}}\in B^*_e$.  Nonstandard analysis ensures that $\mathbb{P}(B_{S,\{B_e\},M^*})=\lim_{M\rightarrow\infty}\mathbb{P}(B_{S,\{B_e\},M})=\mathbb{P}(B_{S,\{B_e\},\mathbf{X}})$.

  
  We now need to construct our decomposition. Fix some $k\leq n$ and the set $\Omega^*\times[M^*]^k$ and the measurable sets, $\mathcal{B}_k$. For any $s\subseteq[k]$, we define $\mathcal{B}_{k,s}$ to be the sub-$\sigma$-algebra of $\mathcal{B}_k$ containing only sets of the form $\{(\omega,i_1,\ldots,i_k)\mid (\omega,\{i_j\}_{j\in s})\in B\}$ where $B$ is a measurable subset of $\Omega^*\times[M^*]^s$. We define $\mathcal{B}_{k,k-1}$ to be the sub-$\sigma$-algebra of $\mathcal{B}_k$ containing $\mathcal{B}_{k,s}$ for all $s$ with $|s|=k-1$.

  We define $\mathcal{B}^-_k$ to contain all sets which are independent from $\mathcal{B}_{k,k-1}$. Note that $\mathcal{B}_k^-\cup\mathcal{B}_{k,k-1}$ generates $\mathcal{B}_k$: if $B\in\mathcal{B}_k$, it has a projection $f_B^+=\mathbb{E}(B\mid\mathcal{B}_{k,k-1})$ which is $\mathcal{B}_{k,k-1}$-measurable, while the complement $f^-_B=\chi_B-f_B^+$ is $\mathcal{B}_k^-$-measurable.

  We will take our spaces $(\Delta_k,\mathcal{D}_k)$ to be $(\Omega^*\times[M^*]^k,\mathcal{B}^-_k)$. We can define a function $\eta:\Omega^*\times[M^*]^n\rightarrow\Delta=\prod_{s\subseteq[n]}\Delta_{|s|}$ by mapping $(\omega,i_1,\ldots,i_n)$ to $\{(\omega,i_{j_1},\ldots,i_{j_k})\}_{\{j_1,\ldots,j_k\}\subseteq[n]}$. This map is quite far from being injective, since the coordinates are duplicated: the range of $\eta$ consists of those points $\{(\omega_s,\{i_{s,j}\}_{j\in s})\}_{s\subseteq[n]}$ such that $i_{s,j}=i_{t,j}$ whenever $j\in s\cap t$ and $\omega_s=\omega_t$ for all $s,t$.

  Nevertheless, this map gives an isomorphism of measure algebras. To see that it is measurable, it suffices just to consider a single $s$ since the $\sigma$-algebra on $\Delta$ is a product space. So consider some measurable $B\in\mathcal{D}_k$ at the coordinate $s=\{j_1,\ldots,j_k\}$; then $\eta^{-1}(B\times\prod_{t\neq s}\Delta_{|t|})$ is precisely those $(\omega,i_1,\ldots,i_n)$ such that $(\omega,i_{j_1},\ldots,i_{j_k})\in B$. This is a measurable subset---indeed, it is a measurable subset belonging to $\mathcal{B}_{n,s}$. Furthermore, it is what we might call an element of $\mathcal{B}_{n,s}^-$: $B$ belongs to $\mathcal{B}_{n,s}$ but is independent from $\bigcup_{t\subseteq s}\mathcal{B}_{n,t}$.

  It is this fact which will let us show that $\eta$ is measure-preserving. Again, since we are dealing with a product measure, it suffices to show that $\eta$ preserves the measure of any box $\prod_{s\subseteq[n]}B_s$ where each $B_s$ is a measurable subset of $\Delta_{|s|}$. The inverse image $\eta^{-1}(\prod_{s\subseteq[n]}B_s)$ is the intersection $\bigcap_{s\subseteq[n]}\eta^{-1}(B_s)$.

  We claim that, when $B_s\in\mathcal{B}_{n,s}^-$ for all $s\subseteq[n]$, $\mu(\bigcap_s B_s)=\prod_s\mu(B_s)$. To see this, we proceed inductively: let $U$ be a collection of subsets of $[n]$; we claim there is a $t\in U$ so that $\mu(\bigcap_{s\in U}B_s)=\mu(B_t)\mu(\bigcap_{s\in U\setminus\{t\}}B_s)$. Choose $t$ to be any maximal element of $U$. Then
  \begin{align*}
    \mu(\bigcap_{s\in U}B_s)
    &=\int \prod_s\chi_{B_s}(\{i_j\}_{j\in s})\,d\mu(i_1,\ldots,i_n)\\
    &=\int \chi_{B_t}(\{i_j\}_{j\in t}) \int \prod_{s\neq t}\chi_{B_s}(\{i_j\}_{j\in s})\,d\mu\,d\mu(\{i_j\}_{j\in[n]\setminus t}).
  \end{align*}
  Here we use the essential fact that the Loeb measures in the nonstandard extension satisfy the conclusion of Fubini's Theorem, allowing us to freely rearrange the order in which we integrate coordinates.  Because we chose $t\in U$ maximal, for every fixed choice of $\{i_s\}_{s\in[n]\setminus t}$, $\prod_{s\neq t}\chi_{B_s}(\{i_j\}_{j\in s})$ is a product of functions depending on a proper subset of the coordinates in $t$, and therefore belongs to $\bigcup_{s\subsetneq t}\mathcal{B}_{n,s}$, and therefore $B_t$ is independent of this function, so $\mu(\bigcap_{s\in U}B_s)=\mu(B_t)\mu(\bigcap_{s\in U\setminus\{t\}}B_s)$. Applying this repeatedly, we get that $\mu(\bigcap_s B_s)=\prod_s\mu(B_s)$, and therefore $\mu(\eta^{-1}(\prod_{s\subseteq[n]}B_s)=\prod_s\mu(\eta^{-1}(B_s))=\prod_s\mu(B_s)$ as needed.

  Even though $\eta$ is not injective, it is still an isomorphism of measure algebras: for every event $B$ in $\mathcal{B}_n$, there is an event $B'$ in $\Delta$ so that $\eta^{-1}(B')$ differs from $B$ on a set of measure $0$.  To see this, note that $\mathcal{B}_n$ is generated by $\bigcup_s\mathcal{B}^-_{n,s}$, so it suffices to consider events of the form $\bigcap_s B_s$ with each $B_s\in\mathcal{B}^-_{n,s}$, since these generate $\mathcal{B}_n$, and we have already seen that $\bigcap_s B_s=\eta^{-1}(\prod_s B_s)$ where we view each $B_s$ as a measurable subset of $\Delta_{|s|}$.

  For any finite set $S$, we can similarly map $\Omega^*\times[M^*]^S$ to $\prod_{s\in\binom{S}{\leq n}}\Delta_{|s|}$ by mapping $(\omega,\{i_j\}_{j\in S})$ to $\{(\omega,\{i_j\}_{j\in s})\}_{s\in\binom{S}{\leq n}}$. Whenever $e\in\binom{S}{n}$, we have a copy of $\Delta$. (Indeed, we could map $\Delta$ to $\prod_{s\subseteq e}\Delta_{|s|}$ in $n!$ different ways; the symmetry of our hypergraph events ensures that the choice does not matter.)

  In particular, for each $B\in\mathcal{U}$, there is an event $B^*$ in $\mathcal{B}_n$ which is mapped by $\eta$ to an event $\hat B$ in $\Delta$. Crucially, the event $B_{S,\{B_e\},M^*}$ is mapped to the intersection of the corresponding events $\hat B_e$ on the respective images of $\Delta$.


  What remains is to get a random variable with $\Delta$ as our state space. For each $K\in\mathcal{K}$, we have the image $\hat K$ in $\Delta$. $\hat K$ is only determined up to measure $0$, so we pick some representative arbitrarily. We define $\rho:\Delta\rightarrow T$ by setting $\rho(\{\xi_s\})$ to be an arbitrary element of $\bigcap_{K\in\mathcal{K},\{\xi_s\}\in\hat K}K$. (If there are no such $K$, this is an intersection over no sets, and therefore the whole space, so $\rho(\{\lambda_\sigma\})$ may be any point.)

  Let $\mu$ be the measure on $T$ given by the common distribution of the $\mathbf{X}_{i_1,\ldots,i_n}$.  Despite the arbitrary choices involved, we claim that $\rho$ is measurable and measure-preserving. First, note that $\rho^{-1}(K)=\hat K$ for $K\in\mathcal{K}$, so $\mu(\rho^{-1}(K))=\mu(K)$.  Let $B$ be any measurable set.  Then, by $\mathcal{K}$-inner regularity, for any $\epsilon>0$ we may choose $K^+\subseteq B$ and $K^-\subseteq T\setminus B$ from $\mathcal{K}$ so that $\mu(K^+\cup K^-)>1-\epsilon$, and therefore $\mu(\hat K^+\cup \hat K^-)>1-\epsilon$. We have $\hat K^+\subseteq\rho^{-1}(B)$ and $\rho^{-1}(B)\cap \hat K^-=\emptyset$, so $\hat K^+$ approximates $\rho^{-1}(B)$ to within $\epsilon$. Since this holds for all $\epsilon$, $\rho^{-1}(B)$ is a measurable set with measure $\mu(B)$.

  We now consider the exchangeable random variable $\mathbf{Y}$ as defined in the statement of the theorem: we choose $\{\xi_\sigma\}_{\sigma\in\binom{\mathbb{N}}{\leq n}}$ according to the measures on $\Delta_{|\sigma|}$ and set $\mathbf{Y}_{\{i_1,\ldots,i_n\}}=\rho(\{\xi_\sigma\}_{\sigma\subseteq\{i_1,\ldots,i_n\}})$. Consider some hypergraph event $(S,\{B_e\})$. The event $B_{S,\{B_e\},\mathbf{Y}}$ is precisely the intersection of the events $B_{e,\{B_e\},\mathbf{Y}}$, and is therefore the image of $B_{S,\{B_e\},M^*}$, and so has probability $\mathbb{P}(B_{S,\{B_e\},M^*})=\mathbb{P}(B_{S,\{B_e\},\mathbf{X}})$. Therefore $\mathbf{Y}$ has the same distribution as $\mathbf{X}$.
\end{proof}

\printbibliography

\end{document}